\begin{document}


\begin{frontmatter}   

\titledata{Saved by the rook: a case of matchings \\and Hamiltonian cycles} {}          

\authordata{Mari\'{e}n Abreu}
{Dipartimento di Matematica, Informatica ed Economia\\ Universit\`{a} degli Studi della Basilicata, Italy}
{marien.abreu@unibas.it}
{}

\authordata{John Baptist Gauci}
{Department of Mathematics, University of Malta, Malta}
{john-baptist.gauci@um.edu.mt}
{}

\authordata{Jean Paul Zerafa}
{Dipartimento di Scienze Fisiche, Informatiche e Matematiche\\ Universit\`{a} degli Studi di Modena e Reggio Emilia, Italy;\\
Department of Technology and Entrepreneurship Education\\ University of Malta, Malta}
{jean-paul.zerafa@um.edu.mt}
{}

\keywords{Perfect matching, Hamiltonian cycle, Cartesian product of complete graphs, line graph, complete bipartite graph.}
\msc{05C45,05C70, 05C76.}

\begin{abstract}
The rook graph is a graph whose edges represent all the possible legal moves of the rook chess piece on a chessboard. The problem we consider is the following. Given any set $M$ containing pairs of cells such that each cell of the $m_1 \times m_2$ chessboard is in exactly one pair, we determine the values of the positive integers $m_1$ and $m_2$ for which it is possible to construct a closed tour of all the cells of the chessboard which uses all the pairs of cells in $M$ and some edges of the rook graph. This is an alternative formulation of a graph-theoretical problem presented in [\emph{Electron. J. Combin.} \textbf{28(1)} (2021), \#P1.7] involving the Cartesian product $G$ of two complete graphs $K_{m_1}$ and $K_{m_2}$, which is, in fact, isomorphic to the $m_{1}\times m_{2}$ rook graph. The problem revolves around determining the values of the parameters $m_1$ and $m_2$ that would allow any  perfect matching of the complete graph on the same vertex set of $G$ to be extended to a Hamiltonian cycle by using only edges in $G$.
\end{abstract}
\end{frontmatter}

\section{Introduction}
The rook chess piece is allowed to move in a horizontal and vertical manner only---no diagonal moves are permissible. The rook graph represents all the possible moves of a rook on a chessboard, with its vertices and edges corresponding to the cells of the chessboard, and the legal moves of the rook from one cell to the other, respectively.
\begin{figure}[h]
      \centering
      \includegraphics[width=0.275\textwidth]{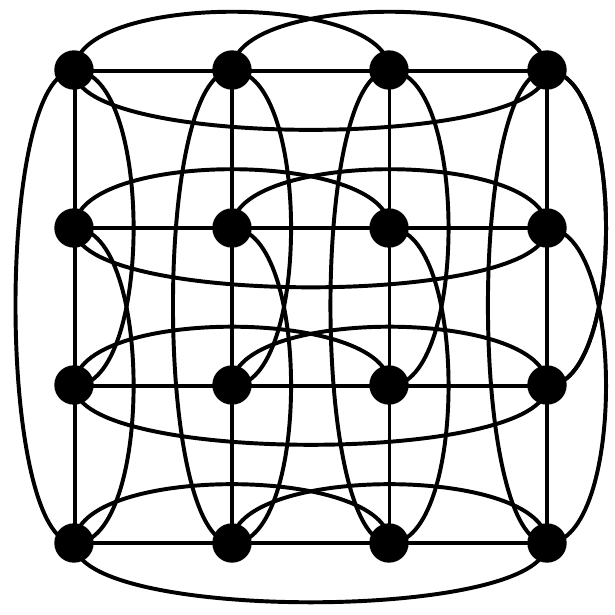}
      \caption{The $4\times 4$ rook graph isomorphic to $K_{4}\square K_{4}$}
      \label{FigureKm1m2}
\end{figure}
All the legal moves of a rook on a $m_{1}\times m_{2}$ chessboard give rise to the \emph{$m_{1}\times m_{2}$ rook graph}. In what follows we consider the following problem.

\begin{problem}\label{problem main}
Let $G$ be a $m_{1} \times m_{2}$ chessboard and let $M$ be a set containing pairs of distinct cells of $G$ such that each cell of $G$ belongs to exactly one pair in $M$. Determine the values of $m_{1}$ and $m_{2}$ for which it is possible to construct a closed tour $H$ visiting all the cells of the chessboard $G$ exactly once, such that:
\begin{itemize}
\item[(i)] consecutive cells in $H$ are either a pair of cells in $M$, or two cells in $G$ which can be joined by a legal rook move; and 
\item[(ii)] $H$ contains all pairs of cells in $M$.
\end{itemize} 
\end{problem}

In other words, given any possible choice of a set $M$ as defined above, is a rook good enough to let one visit, exactly once, all the cells on a chessboard and finish at the starting cell, in such a way that each pair of cells in $M$ is allowed to and must be used once? We remark that $M$ can contain pairs of cells which are not joined by a legal rook move.

As many other mathematical chess problems, the above problem can be restated in graph theoretical terms (for a detailed exposition, we suggest the reader to \cite{Schwenk}). We first give some definitions, and for definitions and notation not explicitly stated here, we refer the reader to \cite{Diestel}. All graphs considered in the sequel will be simple, that is, loops and multiple edges are not allowed. For any graph $G$ with vertex set $V(G)$ and edge set $E(G)$, we let $K_{G}$ denote the complete graph on the same vertex set $V(G)$ of $G$. Let $G$ be of even order, that is, having an even number of vertices. A \emph{Hamiltonian cycle} of a graph $G$ is a cycle of $G$ which visits every vertex of $G$. A \emph{perfect matching} $N$ of a graph $G$ is a set of edges of $G$ such that every vertex of $G$ belongs to exactly one edge in $N$. This means that no two edges in $N$ have a common vertex and that $N$ is a set of independent edges covering $V(G)$. Let $G$ be a graph of even order. A Hamiltonian cycle of $G$ can be considered as the disjoint union of two perfect matchings of $G$. A perfect matching of $K_{G}$ is said to be a \emph{pairing} of $G$. In what follows we shall consider Hamiltonian cycles of $K_{G}$ (for some graph $G$ of even order) composed of a pairing of $G$ and a perfect matching of $G$. In order to distinguish between pairings of $G$, which may possibly contain edges not in $G$, and perfect matchings of $G$, we shall depict pairing edges as green, bold and dashed, and edges of a perfect matching of $G$ as black and bold. 
To emphasise that pairings can contain edges in $G$, we shall depict such edges with a black thin line underneath the green, bold and dashed edge described above. This can be clearly seen in Figure \ref{FigureCube}.

\begin{figure}[h]
      \centering
      \includegraphics[width=0.5\textwidth]{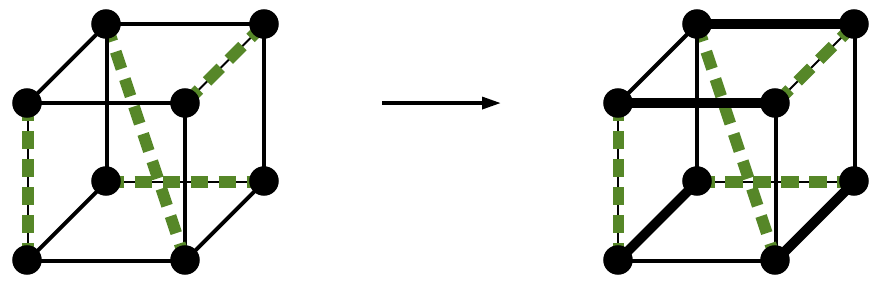}
      \caption{A pairing $M$ in the cube $\mathcal{Q}_{3}$ which is not a perfect matching of $\mathcal{Q}_{3}$ and a Hamiltonian cycle of $K_{\mathcal{Q}_{3}}$ containing $M$}
      \label{FigureCube}
\end{figure} In 2015, the authors in \cite{ThomassenEtAl} say that a graph $G$ has the \emph{Pairing-Hamiltonian property} (the \emph{PH-property} for short) if every pairing $M$ of $G$ can be extended to a Hamiltonian cycle $H$ of $K_{G}$ in which $E(H)-M\subseteq E(G)$. If a graph has the PH-property, for simplicity we shall sometimes say that the graph is PH. 
In order to provide the reader with some examples of graphs having the PH-property, we remark that the authors in \cite{ThomassenEtAl}, amongst other results, gave a complete characterisation of the cubic graphs, that is, graphs with all vertices having degree 3, having the PH-property. There are only three: the complete graph $K_{4}$, the complete bipartite graph $K_{3,3}$ and the 3-dimensional cube $\mathcal{Q}_{3}$ (depicted in Figure \ref{FigureCubicPH}). We note that in the first diagram of Figure \ref{FigureCube}, one of the green, bold and dashed edges is not an edge of $\mathcal{Q}_3$, and thus the diagram illustrates a possible pairing of $\mathcal{Q}_3$ which is not a perfect matching of $\mathcal{Q}_3$. As shown in Figure \ref{FigureCube}, this pairing can be extended to a Hamiltonian cycle of $\mathcal{Q}_3$ by using edges of $\mathcal{Q}_3$. The same argument can be repeated for all pairings of the three graphs shown in Figure \ref{FigureCubicPH}; hence why they have the PH-property. A similar property to the PH-property is the \emph{PMH-property}, short for the \emph{Perfect-Matching-Hamiltonian property} (see \cite{PMHAbreuEtAl} for a more detailed introduction). A graph is said to have the PMH-property, if every perfect matching $M$ of $G$ can be extended to a Hamiltonian cycle $H$ of $K_{G}$ in which $E(H)-M\subseteq E(G)$. We note that in this case, $H$ would also be a Hamiltonian cycle of $G$ itself. In other words, the PMH-property is equivalent to the PH-property restricted to pairings of $G$ which are also perfect matchings of $G$. Thus, the PMH-property is a somewhat weaker property than the PH-property. 

\begin{figure}[h]
      \centering
      \includegraphics[width=0.4\textwidth]{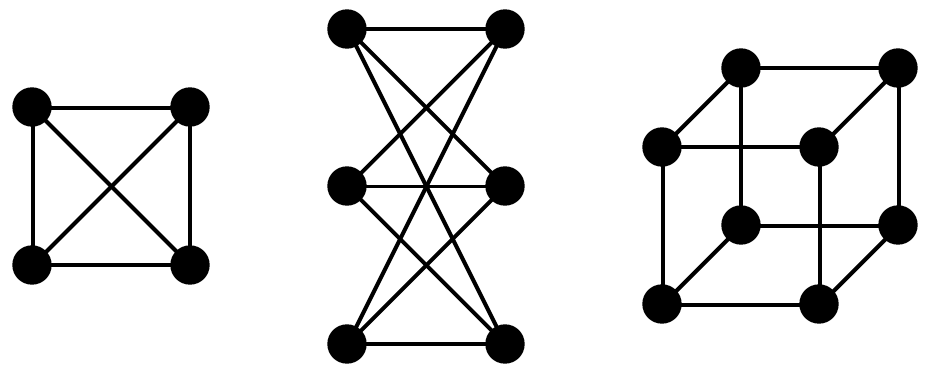}
      \caption{The only cubic graphs having the PH-property}
      \label{FigureCubicPH}
\end{figure}

The \emph{Cartesian product} $G\square H$ of two graphs $G$ and $H$ is a graph whose vertex set is the Cartesian product $V(G) \times V(H)$ of $V(G)$ and $V(H)$. Two vertices $(u_i,v_j)$ and $(u_k,v_l)$ are adjacent precisely if  $u_i=u_k$ and $v_jv_l\in E(H)$ or $u_iu_k \in E(G)$ and $v_j=v_l$. Thus, 
\begin{eqnarray*}
& V(G\square H)= \{(u_r,v_s) : u_r \in V(G) \text{ and } v_s \in V(H)\},\text{ and } \\
& E(G\square H)=\{(u_i,v_j)(u_k,v_l):u_i=u_k,v_jv_l\in E(H)\text{ or } u_iu_k \in E(G), v_j=v_l\}.
\end{eqnarray*}
The \emph{$m_{1}\times m_{2}$ rook graph} is in fact isomorphic to the Cartesian product of the complete graphs $K_{m_{1}}$ and $K_{m_{2}}$, denoted by $K_{m_{1}}\square K_{m_{2}}$.
 
Another result in \cite{ThomassenEtAl} which we shall also be using later on is the following.

\begin{theorem}[Alahmadi \emph{et al.} \cite{ThomassenEtAl}]\label{TheoremThom}
The Cartesian product of a complete graph $K_m$ ($m$ even and $m\geq 6$) and a path $P_q
$ ($q\geq 1$) has the PH-property.
\end{theorem}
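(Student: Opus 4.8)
The plan is to prove, by induction on $q$, the following strengthening: for every even $m\ge 6$ and every $q\ge 1$, every pairing $M$ of $K_m\square P_q$ extends to a Hamiltonian cycle $H$ of $K_{K_m\square P_q}$ with $E(H)-M\subseteq E(K_m\square P_q)$, and moreover $H$ may additionally be required to pass through any single prescribed edge $g$ of the \emph{last layer} (the copy of $K_m$ sitting over an end-vertex of $P_q$), provided $g\notin M$. The theorem is the case in which nothing is prescribed. For the base case $q=1$ the graph is $K_m$ and a pairing is just a perfect matching of $K_m$, so one builds the required Hamiltonian cycle directly by threading the $m/2$ matching edges and $g$ into a single cycle.

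For the inductive step, write $G=K_m\square P_q$ with layers $L_1,\dots,L_q$, where each $L_i$ is a copy of $K_m$ and consecutive layers are joined by a perfect matching of ``rung'' edges, and let $G'=K_m\square P_{q-1}$ be the subgraph on $L_1,\dots,L_{q-1}$. Given a pairing $M$ of $G$ and a prescribed edge $g\in E(L_q)\setminus M$, I would split the edges of $M$ into those internal to $L_q$, those \emph{crossing} from $L_q$ to $G'$, and those internal to $G'$; let $t$ be the number of crossing edges, which is even because $m$ is. The argument then splits according to whether $t\ge 2$ or $t=0$ (with one mildly degenerate sub-case of $t=2$ behaving like $t=0$).

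When $t\ge 2$ — unless $t=2$ and $g$ joins the two $L_q$-ends of the crossing edges — I would exploit the room in the complete layer $L_q$: build a system of $t/2$ pairwise-disjoint paths covering $V(L_q)$ that contain all $M$-edges internal to $L_q$ together with $g$, and whose $t$ endpoints are exactly the $L_q$-ends of the crossing edges. Contracting each such path, together with its two crossing edges, to a single edge on the corresponding pair of vertices of $G'$ yields a pairing $M'$ of $G'$; here one checks that $M'$ is a perfect matching of $K_{G'}$, using that every vertex of $G'$ is covered either by an $M$-edge internal to $G'$ or by a crossing edge. Applying the inductive hypothesis to $(G',M')$ and undoing the contractions then returns the desired Hamiltonian cycle of $K_G$ through $g$.

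The case $t=0$, together with the degenerate $t=2$ sub-case, is the crux, since now $L_q$ must be attached to $G'$ through exactly two rung edges. Here I would first extend the residual pairing of $G'$ to a Hamiltonian cycle $H'$ of $K_{G'}$ by the inductive hypothesis, while simultaneously forcing $H'$ through a carefully chosen edge $f=(a_0,q-1)(b_0,q-1)$ of the last layer $L_{q-1}$ of $G'$; deleting $f$ from $H'$, threading in its place a Hamiltonian path of $L_q$ realising the internal $M$-edges and $g$, and reconnecting through the two rungs at $a_0$ and $b_0$, then produces a single Hamiltonian cycle of $K_G$ with the required properties. For this to go through, $f$ must avoid the residual pairing of $G'$ (so that the inductive hypothesis can prescribe it), while the pair $\{a_0,b_0\}$ must avoid the edges of the matching internal to $L_q$, avoid the two endpoints of $g$, and differ from the pair of endpoints of the short path that $g$ forms with that internal matching inside $L_q$. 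I expect the main obstacle to be precisely the book-keeping for the crossing edges and this last existence claim: one must show that, after removing all the forbidden pairs, at least one admissible $f$ survives among the $m(m-1)/2$ edges of $L_{q-1}$, and a short count shows this holds exactly when $m\ge 6$ — for $m=4$ no admissible $f$ remains, which is why the bound $m\ge 6$ is needed. Verifying that the resulting edge set is a genuine Hamiltonian cycle containing $M$ and $g$ with all non-$M$ edges in $G$ then closes the induction.
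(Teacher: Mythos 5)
The paper does not actually prove Theorem~\ref{TheoremThom}: it is imported from Alahmadi \emph{et al.}~\cite{ThomassenEtAl} and used as a black box, so there is no in-paper argument to compare yours against. Judged on its own merits, your induction on $q$ with the strengthened hypothesis (a prescribable non-pairing edge in the last layer) is a workable skeleton: the base case, the $t\ge 2$ contraction argument, and the $t=0$ surgery through a prescribed edge $f$ of $L_{q-1}$ all go through, and your count showing that an admissible $f$ survives precisely when $m\ge 6$ is correct.

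The genuine gap is the degenerate $t=2$ sub-case, which you lump with $t=0$ but which the $t=0$ surgery does not handle. There $M$ contains two crossing edges $ux$ and $vy$ with $u,v\in V(L_q)$, $x,y\in V(G')$, and $g=uv$. Two things go wrong. First, the ``residual pairing of $G'$'' is not a pairing: $x$ and $y$ are left uncovered by $M\cap E(K_{G'})$, so the inductive hypothesis cannot be applied to it as stated. Second, and more seriously, threading a Hamiltonian path of $L_q$ between two rungs at $a_0,b_0$ yields a cycle that omits the two crossing edges $ux,vy\in M$ altogether, so $H\not\supseteq M$; and these crossing edges need not be rungs (they may land in any layer of $G'$), so your claim that $L_q$ is ``attached through exactly two rung edges'' is false here. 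The case is repairable inside your framework: add $xy$ to the residual pairing to obtain a genuine pairing $M'$ of $G'$; invoke the inductive hypothesis to get $H'\supseteq M'$ through a prescribed $f=(a_0,q-1)(b_0,q-1)$ with $a_0,b_0\notin\{u,v\}$, $\{a_0,b_0\}$ not an $M$-edge, and $f\notin M'$; then replace $xy$ by the segment $x,u,v,y$ (using the two crossing edges and $g$), and replace $f$ by the two rungs together with a Hamiltonian path of $L_q-\{u,v\}$ from $(a_0,q)$ to $(b_0,q)$ containing the perfect matching $M\cap E(K_{L_q})$ of $L_q-\{u,v\}$; the same counting argument, now over the pairs of vertices of $L_q-\{u,v\}$, shows such $a_0,b_0$ exist for $m\ge 6$. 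Until this sub-case is rewritten along these lines, the induction does not close.
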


However, this was not the first time that pairings extending to Hamiltonian cycles were studied. In 2007, Fink \cite{Fink} proved what we believe is one of the most significant results in this area so far: for every $n\geq 2$, the $n$-dimensional hypercube is PH, thus answering a conjecture made by Kreweras (see \cite{kreweras}). The proof of this result, although technical, is very short and elegant. 

With these notions in place, we can restate the above problem as follows.
\begin{problem}[Problem \ref{problem main} restated]\label{problem restated}
Let $G$ be the $m_{1} \times m_{2}$ rook graph, or equivalently $K_{m_{1}}\square K_{m_{2}}$. Determine for which values of $m_{1}$ and $m_{2}$ does $G$ have the PH-property.
\end{problem}

Clearly, in order for $K_{m_{1}}\square K_{m_{2}}$ to admit a pairing, at least one of $m_{1}$ and $m_{2}$ must be even, and without loss of generality, in the sequel we shall tacitly assume that $m_{1}$ is even.

We recall that the \emph{line graph} $L(G)$ of a graph $G$ is the graph whose vertices correspond to the edges of $G$, and two vertices of $L(G)$ are adjacent if the corresponding edges in $G$ are incident to a common vertex. The \emph{$m_{1}\times m_{2}$ rook graph}. or equivalently $K_{m_{1}}\square K_{m_{2}}$,  can also be seen as the line graph of the complete bipartite graph $K_{m_{1},m_{2}}$. The authors in \cite{PMHAbreuEtAl} give some sufficient conditions for a graph $G$ in order to guarantee that its line graph $L(G)$ has the PMH-property. Amongst other results, they show that the line graph of complete graphs $K_{n}$, for $n\equiv 0,1\pmod{4}$, has the PMH-property, and that, by a similar reasoning, $L(K_{m,m})$ has the PMH-property for every even $m\geq 50$. In Section \ref{section main}, we determine for which values $m_{1}$ and $m_{2}$ (with $m_{1}$ not necessarily equal to $m_{2}$) does $L(K_{m_{1},m_{2}})$ admit not only the PMH-property, but also the PH-property. This gives a complete solution to Problem \ref{problem restated}.

\section{Main result}\label{section main}

In this section we give a complete solution to Problem \ref{problem restated}, summarised in the following theorem.

\begin{theorem}\label{TheoremMain}
Let $m_{1}$ be an even integer and let $m_{2}\geq 1$. The $m_{1} \times m_{2}$ rook graph does not have the PH-property if and only if $m_{1}=2$ and $m_{2}$ is odd.
\end{theorem}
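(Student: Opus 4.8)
The plan is to prove the theorem in two parts, corresponding to the ``if'' and ``only if'' directions.

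\medskip

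\noindent\textbf{The negative direction.} First I would dispose of the easy implication: if $m_1 = 2$ and $m_2$ is odd, then $K_2 \square K_{m_2}$ does not have the PH-property. This graph is the prism over $K_{m_2}$, i.e.\ two copies of $K_{m_2}$ (call them the ``top'' and ``bottom'' rows of the $2 \times m_2$ board) joined by a perfect matching of vertical ``rungs''. The idea is to exhibit a bad pairing. Since $m_2$ is odd, each row has an odd number of cells. I would take $M$ to be the pairing consisting of all the vertical rungs: each top cell paired with the bottom cell in its column. Any Hamiltonian cycle $H$ extending $M$ must use every rung, so the remaining edges $E(H) \setminus M$ form a perfect matching of the prism that is disjoint from the rungs — hence it is the union of a perfect matching of the top $K_{m_2}$ and a perfect matching of the bottom $K_{m_2}$. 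But $K_{m_2}$ has odd order, so it has no perfect matching; contradiction. (One must also check the trivial degenerate cases, e.g.\ $m_2 = 1$ giving $K_2$ itself, where the single pairing edge cannot close into a cycle.) This handles the ``only if'' part.

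\medskip

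\noindent\textbf{The positive direction.} The substantive work is to show that in all other cases the rook graph is PH; that is, (a) $m_1 \geq 4$ (even) and any $m_2 \geq 1$, and (b) $m_1 = 2$ with $m_2$ even. I would organise this by cases on $m_1$ and lean on Theorem~\ref{TheoremThom} wherever possible. For $m_1 \geq 6$ even and $m_2$ arbitrary, note $K_{m_2}$ contains a spanning path $P_{m_2}$, so $K_{m_1} \square K_{m_2}$ contains $K_{m_1} \square P_{m_2}$ as a spanning subgraph; by Theorem~\ref{TheoremThom} the latter is PH, and adding edges preserves the PH-property (a Hamiltonian cycle extending a pairing in a subgraph, using subgraph edges, still uses only supergraph edges). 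So this entire range is immediate. The cases left are $m_1 = 4$ with $m_2 \geq 1$, and $m_1 = 2$ with $m_2 \geq 2$ even. For $m_1 = 2$, $m_2$ even: $K_2 \square K_{m_2}$ is the prism over $K_{m_2}$, and here I would give a direct argument — given a pairing $M$, classify its edges as ``vertical'' (rungs) or ``horizontal'' (within a row, possibly a chord of $K_{m_2}$), and build the Hamiltonian cycle by a careful case analysis or an inductive/parity argument on the number of rungs used; since $m_2$ is even each row has a perfect matching available, which is exactly the flexibility the odd case lacked.

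\medskip

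\noindent\textbf{The main obstacle.} I expect the hard case to be $m_1 = 4$ (with small $m_2$, say $m_2 \in \{1,2,3,4,5\}$ and then a general argument for larger $m_2$), since Theorem~\ref{TheoremThom} requires $m \geq 6$ and so does not cover $K_4 \square P_q$. Here one genuinely has to work with the structure of $K_4 \square K_{m_2}$ directly: given an arbitrary pairing $M$, one decomposes $M$ according to how its edges meet the four ``columns'' (copies of $K_{m_2}$) or the $m_2$ ``rows'' (copies of $K_4$), and then stitches together a Hamiltonian cycle. A plausible strategy is an induction on $m_2$: handle a base case (e.g.\ $K_4 \square K_1 = K_4$, which is PH by the cubic characterisation, and $K_4 \square K_2$, $K_4 \square K_3$ by hand or computer check), and for the inductive step, given a pairing of $K_4 \square K_{m_2}$, ``absorb'' the last two rows — find within those two rows (together with the four connecting $K_4$-copies, which form a $K_4 \square K_2$-like gadget with extra chord edges) a path system that incorporates all pairing edges touching those rows and connects appropriately to a Hamiltonian cycle of the remaining $K_4 \square K_{m_2 - 2}$ obtained by induction. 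Making the interface between the absorbed part and the inductive part match up is the delicate bookkeeping; the case distinctions on how many pairing edges are ``internal'' to the last two rows versus ``crossing'' to the rest will be the bulk of the proof, and getting a clean, exhaustive set of cases (rather than an unwieldy one) is the real challenge.
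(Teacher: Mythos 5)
Your negative direction is exactly the paper's: the all-rungs pairing on $K_2 \square K_{m_2}$ forces the complement of the pairing in any extending Hamiltonian cycle to be a perfect matching of each odd-order row, which cannot exist. The reduction of $m_1 \geq 6$ to Theorem~\ref{TheoremThom} via the spanning subgraph $K_{m_1}\square P_{m_2}$ is also the paper's argument. From that point on, however, your proposal is a plan rather than a proof, and the two places where you defer the work are precisely where the content of the theorem lives. For $m_1=2$ with $m_2$ even you promise ``a careful case analysis or an inductive/parity argument'' without giving it; the paper avoids this entirely by invoking Theorem~\ref{TheoremThom} for $m_2\geq 6$ and, for $m_2=4$, observing that the cube $\mathcal{Q}_3$ is a spanning subgraph of $K_2\square K_4$ and is PH by Fink's theorem. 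You should either supply your direct argument or notice that only $m_2\in\{2,4\}$ actually need ad hoc treatment.

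The serious gap is $m_1=4$ with $m_2$ odd. You correctly isolate it as the hard case, but your proposed induction (absorb the last two rows of $K_4\square K_{m_2}$ into a Hamiltonian cycle of $K_4\square K_{m_2-2}$) is not carried out, and its hardest instance is exactly the one your sketch does not address: when every copy of $K_4$ is internally perfectly matched by $M$, there are no pairing edges crossing between rows, so there is no natural interface along which to splice the absorbed rows into the inductive cycle, and the bookkeeping you flag as ``the real challenge'' does not obviously close. The paper's key idea is a different dichotomy. If some $K_4^i$ is \emph{not} internally perfectly matched by $M$, one completes $M\cap E(K_4^i)$ to a perfect matching of that $K_4$, transfers the crossing pairing edges to a pairing of the remaining $K_4\square K_{m_2-1}$ --- which has an \emph{even} second parameter and is therefore already known to be PH --- and then reroutes the resulting Hamiltonian cycle through $K_4^i$; this decreases $m_2$ by one, not two, deliberately changing parity so as to land in the solved cases. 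If every $K_4^i$ \emph{is} internally perfectly matched, $M$ is rigid enough (each row is one of only three types) that a Fink-style splitting works: build a Hamiltonian cycle on the half induced by $\{b_i,c_i\}$ (a copy of $K_2\square K_{m_2}$) through a suitably chosen pairing, delete the added edges to obtain a path system, and transfer the path endpoints as a pairing of the other half $\{a_i,d_i\}$. Without a structural case split of this kind on how $M$ meets the rows, your induction has no handle on the all-rows-matched pairings, and a ``computer check'' base case at $K_4\square K_3$ does not substitute for the inductive step.
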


\begin{proof}
When $m_{2}=1$, $K_{m_{1}}\square K_1$ is $K_{m_{1}}$ and the result clearly follows. Consequently, we shall assume that $m_{2}>1$. By Theorem \ref{TheoremThom}, $K_{m_{1}}\square K_{m_{2}}$ is PH when $m_{1}\geq 6$, since $K_{m_{1}}\square K_{m_{2}}$ contains $K_{m_{1}}\square P_{m_{2}}$, and, in general, if a graph contains a spanning subgraph which is PH, the initial graph is itself PH.

So consider the cases when $m_{1}=2$ or $4$. If $m_{1}=2$, $K_{m_{1}}\square K_{m_{2}}$ is PH if and only if $m_{2}\equiv 0 \pmod{2}$. In fact, if $m_{2}$ is odd, the pairing consisting of the $m_{2}$-edge-cut between the two copies of $K_{m_2}$ cannot be extended to a Hamiltonian cycle, as can be seen in Figure \ref{FigureK23}. 
\begin{figure}[h!]
      \centering
      \includegraphics[width=0.165\textwidth]{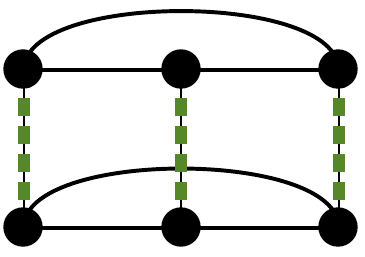}
      \caption{A pairing in $K_{2}\square K_{3}$ which cannot be extended to a Hamiltonian cycle}
      \label{FigureK23}
\end{figure}
If $m_{2}$ is even, the result follows once again by Theorem \ref{TheoremThom} when $m_{2}\geq 6$. If $m_{2}=2$, the result easily follows, and when $m_{2}=4$, $K_{2}\square K_{4}$ is PH because the 3-dimensional cube $\mathcal{Q}_{3}$ is a subgraph of $K_{2}\square K_{4}$ and has the PH-property by Fink's result in \cite{Fink} (also referred to previously).

What remains to be considered is the case when $m_1=4$ and $m_2\geq 3$. The graph $K_4\square K_4$ contains $C_{4}\square C_{4}$, the 4-dimensional hypercube $Q_{4}$, which is PH (\cite{Fink}), and for $m_{2}\geq 6$ and $m_{2}$ even, the result follows once again by Theorem \ref{TheoremThom}. Therefore, what remains to be shown is the case when $m_2 \geq 3$ and $m_2$ is odd, which is settled in the following technical lemma.
\end{proof}

\begin{lemma}\label{LemmaK4odd}
For every odd $m\geq 3$, the $4 \times m$ rook graph has the PH-property.
\end{lemma}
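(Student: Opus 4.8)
The plan is to fix an arbitrary pairing $M$ of the $4\times m$ rook graph $G=K_4\square K_m$ (with $m$ odd, $m\geq 3$) and build a Hamiltonian cycle of $K_G$ using $M$ together with edges of $G$. I would set up coordinates so that the vertex set is $\{1,2,3,4\}\times\{1,\dots,m\}$, and think of $G$ as $m$ "columns" $C_1,\dots,C_m$, each inducing a $K_4$, together with a $K_m$ on each of the four "rows". The edges of the pairing $M$ split naturally into \emph{vertical} edges (both endpoints in the same column) and \emph{non-vertical} edges (endpoints in different columns, joined in $G$ by a row edge or not joined at all in $G$). The first step is a counting/parity analysis: since each column has exactly $4$ vertices, the number of non-vertical pairing edges meeting a given column is even, so each column is met by $0$, $2$, or $4$ non-vertical edges of $M$; I would classify columns by this number and by how the (at most two) vertical edges of $M$ sit inside them.

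The core of the argument is a reduction to the already-known PH instances. Because $m$ is odd, I cannot directly invoke Theorem \ref{TheoremThom} (which needs the even factor to be the $K_m$ with $m\geq 6$), nor slice $G$ into copies of $Q_3$ or $Q_4$ covering everything. Instead I would try to peel off a small number of "bad" columns — say those carrying non-vertical pairing edges or awkward vertical ones — handle them by hand with explicit local routing, and absorb the remaining columns, which behave like a clean $K_4\square P_q$ block, using Theorem \ref{TheoremThom} / Fink's hypercube result on that block. Concretely: after removing the bad columns, the leftover columns form a union of intervals; on each such interval $K_4\square P_q$ is PH for the induced sub-pairing, so the vertical pairing edges there extend to a Hamiltonian path of $K_4\square P_q$ with prescribed endpoints in the two end-columns (one needs the slightly stronger ``Hamiltonian path'' version, which the proof of Theorem \ref{TheoremThom} in \cite{ThomassenEtAl} actually gives, or one derives it by adding a dummy column). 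Then I stitch these paths and the explicitly-routed bad columns together, along row edges, into a single Hamiltonian cycle of $K_G$ containing all of $M$. The parity obstruction that kills $K_2\square K_m$ does not arise here because with four rows available there is always an even matching of ``interface'' edges between consecutive blocks that can be chosen consistently.

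For the explicit handling of the bad columns I would argue case by case on the configuration of $M$ restricted to a column $C_i$ and its immediate neighbours: the main subcases are (a) a column with two vertical pairing edges, (b) a column with one vertical and two non-vertical edges, (c) a column with four non-vertical edges, possibly pairing into a neighbouring column or "jumping" over columns. In each subcase I would exhibit a short segment of the desired cycle that consumes the relevant pairing edges and enters/leaves the surrounding structure at controlled row-vertices, using the richness of $K_4$ (it is itself PH) inside each column and the completeness of the rows to re-route. A convenient base case is $m=3$, where $K_4\square K_3$ has only $12$ vertices and every pairing can be dispatched directly (or by noting $K_4\square K_3$ contains enough $Q_3$'s); this also serves as the anchor for an induction on $m$ that removes two columns at a time (keeping $m$ odd) once their pairing edges are internal.

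The main obstacle I expect is the bookkeeping in step (c) and the inductive step: when non-vertical pairing edges link non-adjacent columns or form long "chains" across many columns, the clean block-decomposition breaks, and one must show that the chain can always be threaded into a Hamiltonian cycle without creating a short cycle or leaving a vertex uncovered. Handling these chains — essentially contracting each maximal chain of pairing edges and arguing that the contracted multigraph on the columns still admits a suitable Eulerian-type traversal that respects the $4$-row capacity constraints — is where the real work lies; the rest is routine local surgery plus appeals to Theorem \ref{TheoremThom} and Fink's theorem.
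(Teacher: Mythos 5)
There is a genuine gap — in fact two. First, your reduction to ``clean'' blocks rests on applying Theorem \ref{TheoremThom} (or a prescribed-endpoints Hamiltonian-path strengthening of it) to $K_4\square P_q$, but that theorem requires the complete factor to have even order \emph{at least} $6$; it says nothing about $K_4\square P_q$, and the prescribed-endpoints version you invoke is nowhere established. Second, and more seriously, you yourself flag that the case of non-vertical pairing edges forming chains across non-adjacent columns ``is where the real work lies'' — but that case is not a bookkeeping afterthought, it is the entire difficulty of the lemma. A pairing of $K_G$ can join arbitrary cells, so almost every pairing produces such chains; contracting them and hoping for an Eulerian-type traversal respecting row capacities is a sketch of a plan, not an argument. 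As it stands the proposal proves the lemma only for pairings whose non-vertical edges are confined to a few isolated, locally routable columns.

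The paper avoids both problems with a different dichotomy that you should compare against. Either (Case 1) some column $K_4^i$ is \emph{not} internally perfectly matched by $M$: then one removes that single column, completes $M$ restricted to it to a perfect matching $M_1$ of the $K_4$, replaces each crossing pair of $M$-edges $a_1x$, $b_1y$ (with $a_1b_1\in M_1$) by the virtual pairing edge $xy$ in the remaining $K_4\square K_{m-1}$, invokes the PH-property of $K_4\square K_{m-1}$ (legitimate, since $m-1$ is even, using $Q_3$, $Q_4$ or Theorem \ref{TheoremThom} as appropriate), and splices the column back in by local surgery on the virtual edges — this Fink-style contraction absorbs \emph{all} chains at once, no matter how they wander, which is exactly the step your approach lacks. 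Or (Case 2) every column is internally perfectly matched, so there are no non-vertical $M$-edges at all; the paper then splits $G$ into two copies of $K_2\square K_m$ (rows $b,c$ and rows $a,d$), builds a Hamiltonian cycle on the first copy through an auxiliary pairing containing $M$ there, breaks it into paths, transfers the path endpoints to the second copy as a new pairing, and splices the two layers together. If you want to salvage your outline, the key missing idea is that you should not try to peel off all bad columns and route their chains explicitly; you should peel off \emph{one} deficient column and push the chains into a smaller instance that is already known to be PH.
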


\begin{proof}
Let the $4\times m$ rook graph $K_{4}\square K_{m}$ be denoted by $G$. We let the vertex set of $G$ be $\{a_{i},b_{i},c_{i},d_{i}:i\in[m]\}$, such that for each $i$, the vertices $a_{i},b_{i},c_{i},d_{i}$ induce a complete graph on four vertices, denoted by $K_{4}^{i}$, and the vertices represented by the same letter induce a $K_{m}$. Let $M$ be a pairing of $G$. We consider two cases: 

\noindent\textbf{Case 1.} $M$ does not induce a perfect matching in each $K_{4}^{i}$; and 

\noindent\textbf{Case 2.} $M$ induces a perfect matching in each $K_{4}^{i}$.

We start by considering \textbf{Case 1}, and without loss of generality assume that $|M\cap E(K_{4}^{1})|<2$. If we delete all the edges having exactly one end-vertex in $K_{4}^{1}$ from $G$, we obtain two components $G_{1}$ and $G_{2}$ isomorphic to $K_{4}^{1}$ and $K_{4}\square K_{m-1}$, respectively. Since $G_{1}$ is of even order and $M\cap E(G_{1})$ is not a perfect matching of this graph, $G_{1}$ has an even number (two or four) of vertices which are unmatched by $M\cap E(G_{1})$.

We pair these unmatched vertices such that $M\cap E(G_{1})$ is extended  to a perfect matching $M_{1}$ of $G_{1}$. By a similar reasoning, $M\cap E(G_{2})$ does not induce a pairing of $G_{2}$ and the number of vertices in $G_{2}$ which are unmatched by $M\cap E(G_{2})$ is again two or four. Without loss of generality, let $a_{1},b_{1}$ be two vertices in $G_{1}$ unmatched by $M\cap E(G_{1})$ such that $a_{1}b_{1}\in M_{1}$, and let $x,y$ be the two vertices in $G_{2}$ such that $a_{1}x$ and $b_{1}y$ are both edges in the pairing $M$ of $G$. We extend $M\cap E(G_{2})$ to a pairing $M_{2}$ of $G_{2}$ by adding the edge $xy$ to $M\cap E(G_{2})$, and we repeat this procedure until all vertices in $G_{2}$ are matched. Since $m-1$ is even, $G_{2}$ has the PH-property and so $M_{2}$ can be extended to a Hamiltonian cycle $H_{2}$ of $K_{G_{2}}$. We extend $H_{2}$ to a Hamiltonian cycle of $G$ containing $M$ as follows. If $c_{1}d_{1}\in M\cap E(G_{1})$, we replace the edge $xy$ in $H_{2}$ by the edges $xa_{1}, a_{1}d_{1}, d_{1}c_{1}, c_{1}b_{1},b_{1}y$, as in Figure \ref{FigureK43}.
Otherwise, $c_{1}d_{1}\in M_{1}-(M\cap E(G_{1}))$, and so there exist two vertices $u,v$ in $G_{2}$ such that $c_{1}u$ and $d_{1}v$ belong to belong to the initial pairing $M$, and $uv$ belongs to $M_{2}$. In this case, we replace the edges $xy$ and $uv$ in $H_{2}$ by the edges $xa_{1}, a_{1}b_{1}, b_{1}y$, and $uc_{1}, c_{1}d_{1}, d_{1}v$, respectively. In either case, $H_{2}$ is extended to a Hamiltonian cycle of $G$ containing the pairing $M$, as required. 

\begin{figure}[h]
      \centering
      \includegraphics[width=0.55\textwidth]{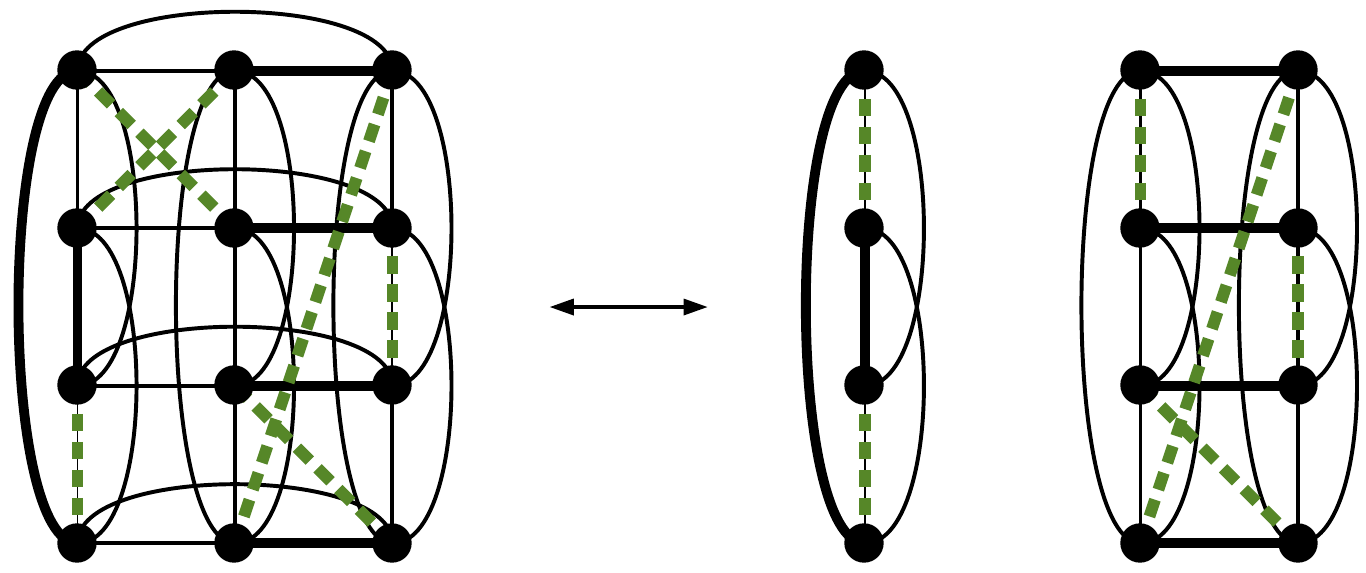}
      \caption{An illustration of the inductive step in Case 1 when $m_{2}=3$}
      \label{FigureK43}
\end{figure}

Next, we move on to \textbf{Case 2}, that is, when $M$ induces a perfect matching in each $K_{4}^{i}$. This case is true by Proposition 1 in \cite{ThomassenEtAl}, however, here we adopt a constructive and more detailed approach highlighting the very useful technique used in \cite{Fink}. There are three different ways how $M$ can intersect the edges of $K_{4}^{i}$, namely $M\cap E(K_{4}^{i})$ can either be equal to $\{a_{i}b_{i},c_{i}d_{i}\}$, $\{a_{i}c_{i},b_{i}d_{i}\}$, or $\{a_{i}d_{i},b_{i}c_{i}\}$. The number of 4-cliques intersected by $M$ in $\{a_{i}b_{i},c_{i}d_{i}\}$ is denoted by $\fourIdx{}{}{ab}{cd}{\nu}$, and we shall define $\fourIdx{}{}{ac}{bd}{\nu}$ and $\fourIdx{}{}{ad}{bc}{\nu}$ in a similar way. Without loss of generality, we shall assume that $\fourIdx{}{}{ab}{cd}{\nu}\geq \fourIdx{}{}{ac}{bd}{\nu} \geq \fourIdx{}{}{ad}{bc}{\nu}$. We shall also assume that the first $\fourIdx{}{}{ab}{cd}{\nu}$ 4-cliques in $\{K_{4}^{i}:i\in[m]\}$ are the ones intersected by $M$ in $\{a_{i}b_{i},c_{i}d_{i}\}$, and, if $\fourIdx{}{}{ad}{bc}{\nu}\neq 0$, the last $\fourIdx{}{}{ad}{bc}{\nu}$ 4-cliques are the ones intersected by $M$ in $\{a_{i}d_{i},b_{i}c_{i}\}$. This can be seen in Figure \ref{Figure221}, in which ``unnecessary" curved edges of $G$ are not drawn so as to render the figure more clear.

\begin{figure}[h]
      \centering
      \includegraphics[width=0.512\textwidth]{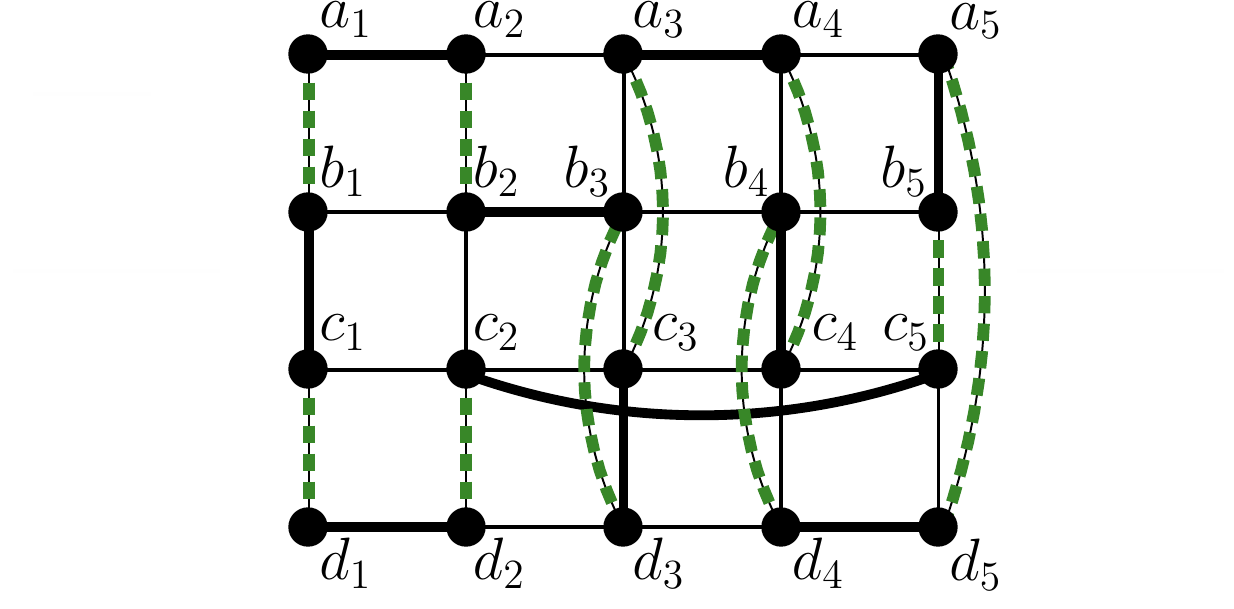}
      \caption{$G$ when $\fourIdx{}{}{ab}{cd}{\nu}=2$, $\fourIdx{}{}{ac}{bd}{\nu}=2$ and $\fourIdx{}{}{ad}{bc}{\nu}=1$}
      \label{Figure221}
\end{figure}

When $\fourIdx{}{}{ab}{cd}{\nu}=1$, we have that $\fourIdx{}{}{ac}{bd}{\nu}=\fourIdx{}{}{ad}{bc}{\nu}=1$, and in this case it is easy to see that $M$ can be extended to a Hamiltonian cycle of $K_G$, for example $(a_1, b_1, c_1, d_1, d_3, a_3, c_3,  b_3, b_2, d_2, \linebreak c_2, a_2)$. We remark that this is the only time when all the 4-cliques are intersected differently by $M$. Therefore, assume $\fourIdx{}{}{ab}{cd}{\nu}\geq 2$. First, let $\fourIdx{}{}{ab}{cd}{\nu}=2$. If $\fourIdx{}{}{ad}{bc}{\nu}=0$, then, $\fourIdx{}{}{ac}{bd}{\nu}=1$ and it is easy to see that $M$ can be extended to a Hamiltonian cycle of $K_{G}$, for example $(a_1, b_1, b_2, a_2, a_3, c_3, b_3, d_3, d_2, c_2, c_1, d_1)$. The only other possibility is to have $\fourIdx{}{}{ac}{bd}{\nu}=2$ and $\fourIdx{}{}{ad}{bc}{\nu}=1$, and once again $M$ can be extended to a Hamiltonian cycle of $K_{G}$, as Figure \ref{Figure221} shows.

Thus, we can assume that $\fourIdx{}{}{ab}{cd}{\nu}\geq 3$. Let $r=\fourIdx{}{}{ab}{cd}{\nu}+\fourIdx{}{}{ac}{bd}{\nu}$ and let $r'$ be the largest even integer less than or equal to $r$. Moreover, let $G_{1}$ be the subgraph of $G$ induced by the vertices $\{b_{i},c_{i}: i\in[m]\}$ (isomorphic to $K_{2}\square K_{m}$) and let $M_{1}=\{b_{1}b_{2},\ldots, b_{r'-1}b_{r'}, c_{1}c_{2},\ldots, \linebreak c_{r'-1}c_{r'}, b_{r'+1}c_{r'+1},\ldots, b_{m}c_{m}\}$. Clearly, $M_{1}$ is a pairing of $G_{1}$ which contains $M\cap E(G_{1})$, and can be extended to a Hamiltonian cycle $H_{1}$ of $K_{G_{1}}$ as follows: $(b_{1},b_{2},\ldots,\linebreak  b_{r'}, b_{r'+1}, c_{r'+1},c_{r'+2},b_{r'+2},\ldots, b_{m}c_{m},c_{r'}, c_{r'-1}, \ldots,c_{1})$. This is depicted in Figure \ref{FigureH2}. We note that if $r'=m-1$, we do not consider the index $r'+2$ in the last sequence of vertices forming $H_{1}$.
Deleting the edges belonging to $M_{1}-M$ from $H_{1}$ gives a collection of $r$ disjoint paths $\mathcal{P}=\{P^{i}:i\in[r]\}$. We note that the union of all the end-vertices of the paths in $\mathcal{P}$ give $\{b_{i},c_{i}:i\in[r]\}$. If we look at the example given in Figure \ref{FigureH2}, the only path in $\mathcal{P}$ on more than two vertices is the path $b_{8}b_{9}c_{9}c_{10}b_{10}b_{11}c_{11}c_{8}$.

\begin{figure}[h]
      \centering
      \includegraphics[width=.75\textwidth]{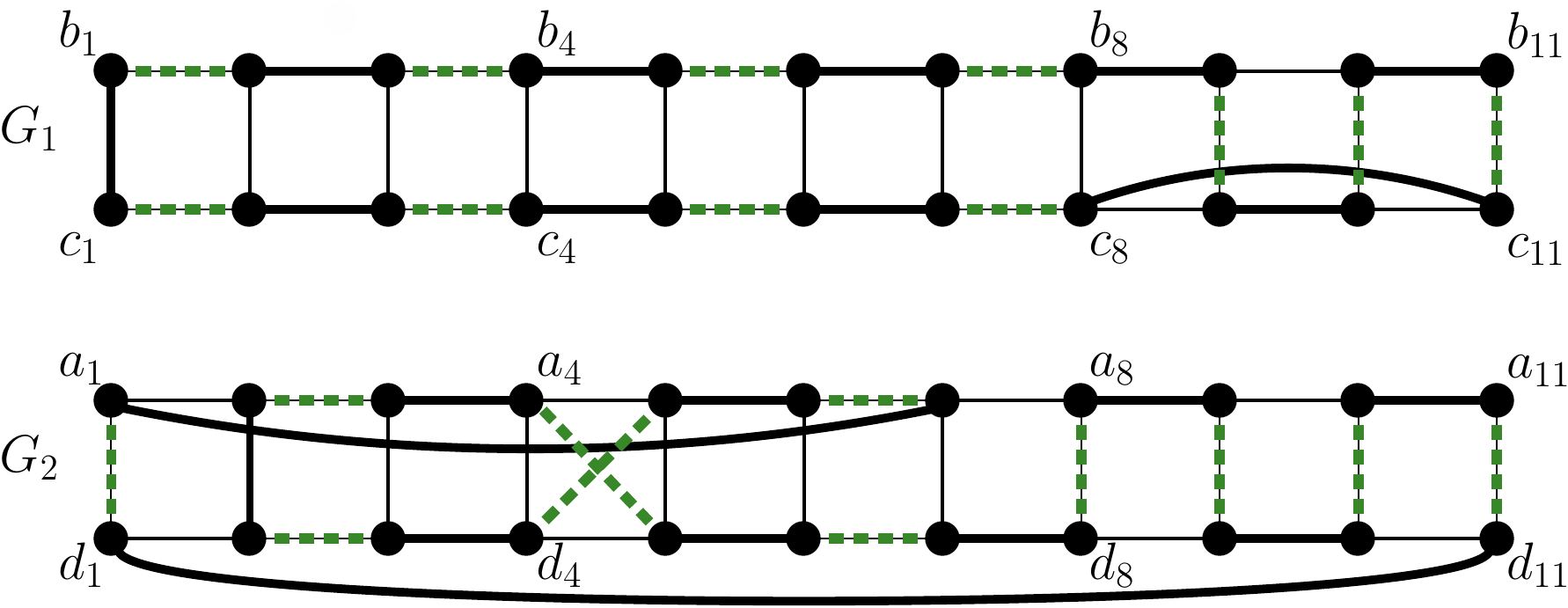}
      \caption{$G_{1}$ and $G_{2}$ when $\fourIdx{}{}{ab}{cd}{\nu}=4$, $r=r'=8$, and $m=11$ in Case 2}
      \label{FigureH2}
\end{figure}

Next, let $G_{2}$ be the subgraph of $G$ induced by the vertices $\{a_{i},d_{i}: i\in[m]\}$, which is isomorphic to $K_{2}\square K_{m}$ as $G_{1}$. For every $i\in[r]$, we let $u_{i}$ and $v_{i}$ be the two end-vertices of the path $P^{i}$, and we let $x_{i}$ and $y_{i}$ be the two vertices in $G_{2}$ such that $u_{i}x_{i}$ and $v_{i}y_{i}$ both belong to $M$. We remark that $\{a_{i},d_{i}:i\in[r]\}=\{x_{i},y_{i}:i\in[r]\}$. Let $M_{2}=\{x_{1}y_{1}, \ldots, x_{r}y_{r}\}\cup (M\cap E(G_{2}))$. If $r=m$, then $M\cap E(G_{2})$ is empty, otherwise it consists of $\{a_{r+1}d_{r+1},\ldots, a_{m}d_{m}\}$. 
If $\fourIdx{}{}{ab}{cd}{\nu}$ is even (as in Figure \ref{FigureH2}), $M_{2}$ contains: \[\{a_{1}d_{1}, a_{2}a_{3}, \ldots, a_{\fourIdx{}{}{ab}{cd}{\nu}-2}a_{\fourIdx{}{}{ab}{cd}{\nu}-1}, a_{\fourIdx{}{}{ab}{cd}{\nu}}d_{\fourIdx{}{}{ab}{cd}{\nu}+1}, d_{2}d_{3}, \ldots, d_{\fourIdx{}{}{ab}{cd}{\nu}-2}d_{\fourIdx{}{}{ab}{cd}{\nu}-1}, d_{\fourIdx{}{}{ab}{cd}{\nu}}a_{\fourIdx{}{}{ab}{cd}{\nu}+1}\}.\] 

Otherwise, $M_{2}$ contains $\{a_{1}d_{1}, a_{2}a_{3}, \ldots, a_{\fourIdx{}{}{ab}{cd}{\nu}-1}a_{\fourIdx{}{}{ab}{cd}{\nu}}, d_{2}d_{3}, \ldots, d_{\fourIdx{}{}{ab}{cd}{\nu}-1}d_{\fourIdx{}{}{ab}{cd}{\nu}}\}$. Moreover, if $r$ is even, then $a_{r}d_{r}\in M_{2}$. In either case, $M_{2}$ can be extended to a Hamiltonian cycle $H_{2}$ of $K_{G_{2}}$, as can be seen in Figure \ref{FigureH2}, which shows the case when $\fourIdx{}{}{ab}{cd}{\nu}$ and $r$ are both even. We remark that the green, bold and dashed edges in the figure are the ones in $M_{1}$ and $M_{2}$. If for each $i\in[r]$, we replace the edges $x_{i}y_{i}$ in $H_{2}$ by $x_{i}u_{i}$, the path $P^{i}$, and $v_{i}y_{i}$ (as in Figure \ref{FigureH2FINAL}), a Hamiltonian cycle of $K_{G}$ containing $M$ is obtained, proving our theorem.
\end{proof}

\begin{figure}[h]
      \centering
      \includegraphics[width=0.7\textwidth]{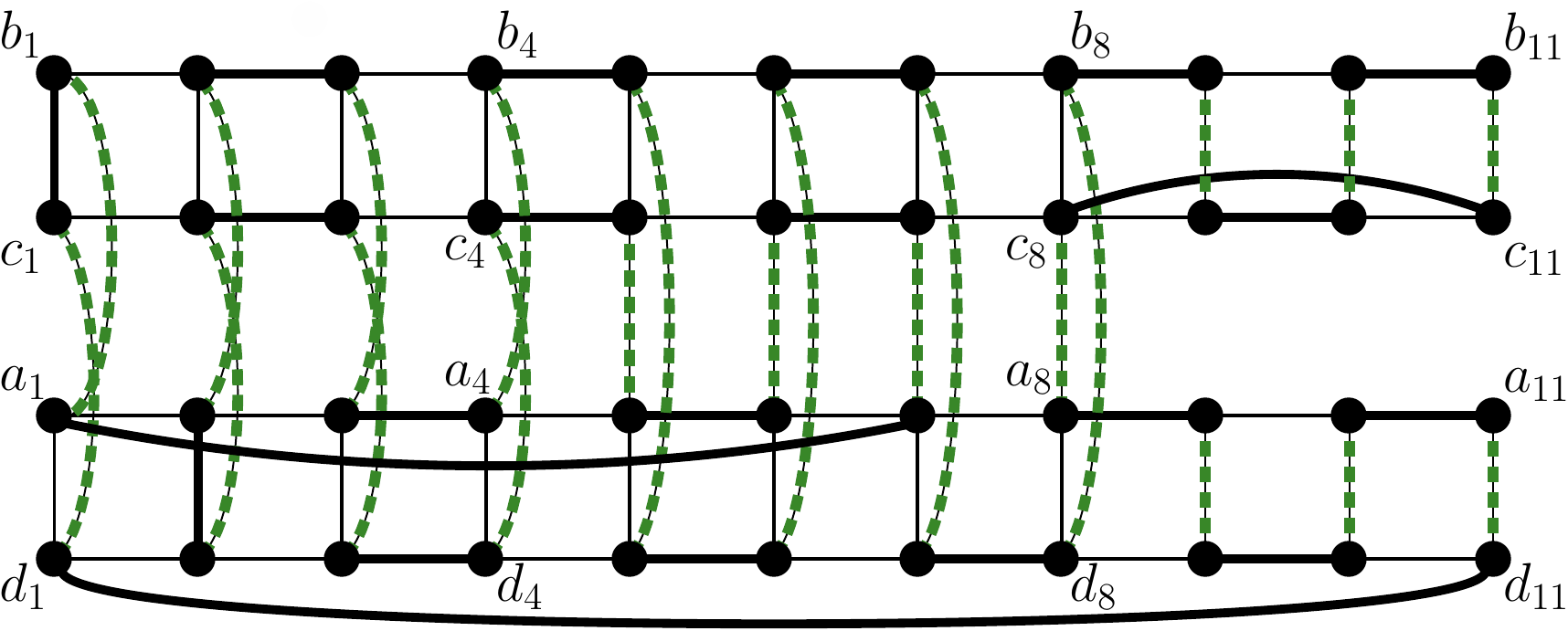}
      \caption{Extending $H_{1}$ and $H_{2}$ from Fig. \ref{FigureH2} to a Hamiltonian cycle of $K_{G}$ containing $M$}
      \label{FigureH2FINAL}
\end{figure}

\section{Bishop-on-a-rook graph}

In the next theorem we present a rather simple proof to show that the complete bipartite graph having equal partite sets (otherwise it does not admit a perfect matching) is PH.

\begin{theorem}\label{TheoremKnnPH}
For every $n\geq 2$, the complete bipartite graph $K_{n,n}$ has the PH-property.
\end{theorem}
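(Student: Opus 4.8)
The plan is to set up coordinates so that the two partite sets of $K_{n,n}$ are $A=\{a_1,\ldots,a_n\}$ and $B=\{b_1,\ldots,b_n\}$, and to think of a pairing $M$ of $K_{n,n}$ as a perfect matching of $K_{2n}$ on the vertex set $A\cup B$. Such a pairing splits naturally into three parts: the ``crossing'' edges, which go between $A$ and $B$ (these are already edges of $K_{n,n}$), and the ``internal'' edges inside $A$ and inside $B$ (these are the edges of $M$ not in $K_{n,n}$, which we must bypass using a path of length two or three through the other side). Note that $M$ induces the same number $k$ of internal edges inside $A$ as inside $B$, since $|A|=|B|=n$ and the crossing edges use up equally many vertices on each side; so $M$ consists of $k$ edges inside $A$, $k$ edges inside $B$, and $n-2k$ crossing edges, for some $0\le k\le \lfloor n/2\rfloor$.

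The key idea is to reduce to the case where there are no internal edges at all, i.e. where $M$ is a perfect matching of $K_{n,n}$ itself, and then invoke the fact that $K_{n,n}$ is Hamiltonian and in fact has the PMH-property, or simply exhibit a Hamiltonian cycle through $M$ directly (any perfect matching of $K_{n,n}$ can be completed to a Hamiltonian cycle by reindexing $B$ so that $M=\{a_ib_i\}$ and then taking $a_1b_1a_2b_2\cdots a_nb_na_1$, using $b_ia_{i+1}\in E(K_{n,n})$). To handle the internal edges, I would first deal with each internal pair on the $A$-side, say $\{a_i,a_j\}\in M$: pick an as-yet-unused vertex $b_\ell\in B$ and route through it, replacing the ``forbidden'' edge $a_ia_j$ by the path $a_i b_\ell a_j$ which uses two genuine edges of $K_{n,n}$; symmetrically for internal $B$-pairs. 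The bookkeeping that makes this work is that the $2k$ vertices of $B$ that are endpoints of no internal $A$-edge must be matched; $k$ of them are consumed as the ``via'' vertices for the $k$ internal $A$-pairs and $k$ are endpoints of internal $B$-pairs — but one has to be slightly careful that the via vertex chosen for an $A$-pair is not itself inside a $B$-pair, so I would choose the via vertices to be exactly the $n-2k$ endpoints of crossing edges together with a matched-up scheme; concretely, after the routing we get a new auxiliary graph on the remaining structure that is again a ``crossing-only'' pairing on a balanced bipartite set, to which the base case applies.

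Cleaner still, and the route I would actually write up: induct on $k$, the number of internal $A$-edges (equivalently $B$-edges). The base case $k=0$ is the PMH-type argument above. For the inductive step, take one internal edge $a_ia_j\in M$ and one internal edge $b_sb_t\in M$; delete $a_i,a_j,b_s,b_t$ from $K_{n,n}$, leaving $K_{n-2,n-2}$, and delete these four edge-endpoints' pairing edges from $M$ while adding the single crossing pairing edge joining the ``partners'' — wait, $a_i$ and $a_j$ are each other's partners, so simply drop $a_ia_j$ and $b_sb_t$ from $M$ to get a pairing $M'$ of $K_{n-2,n-2}$ with one fewer internal edge on each side. By induction $M'$ extends to a Hamiltonian cycle $H'$ of $K_{n-2,n-2}$; now re-insert $a_i,a_j,b_s,b_t$. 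Pick any edge $b_\ell a_m$ of $H'$ that lies in $E(K_{n-2,n-2})$ (a crossing edge of the cycle — one exists since $H'$ is a cycle in a bipartite graph so every other edge crosses) and splice in $a_i,a_j$ there: replace $b_\ell a_m$ by $b_\ell a_i a_j$ ... no, $a_i a_j$ is a forbidden edge — instead replace the crossing edge $b_\ell a_m$ by the path $b_\ell a_i b_? \ldots$; I need two insertion points. So: choose two disjoint crossing edges $b_{\ell_1}a_{m_1}, b_{\ell_2}a_{m_2}$ of $H'$ and replace $b_{\ell_1}a_{m_1}$ by $b_{\ell_1}\,a_i\,a_j$... still forbidden. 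The correct splice is: replace $b_{\ell_1}a_{m_1}$ by $b_{\ell_1}a_i$ followed by inserting $b_s$: $b_{\ell_1}\,a_i\,b_s\,a_j\,?$ — here $a_ib_s$ and $b_sa_j$ are real edges of $K_{n,n}$, $b_{\ell_1}a_i$ is real, but $a_j$ must then rejoin the cycle; so replace the single edge $b_{\ell_1}a_{m_1}$ by the path $b_{\ell_1}\,a_i\,b_s\,a_j\,?$ and we still need to close to $a_{m_1}$, impossible in one edge since $a_j a_{m_1}$ is forbidden. Hence use two insertion points after all: in $H'$ pick crossing edges $e_1=b_{\ell_1}a_{m_1}$ and $e_2=b_{\ell_2}a_{m_2}$; replace $e_1$ by $b_{\ell_1}\,a_i\,b_t\,a_{m_1}$? only if $a_ib_t, b_ta_{m_1}\in E$, which they are, but then $b_t$ is used; and replace $e_2$ by $b_{\ell_2}\,a_j\,b_s\,a_{m_2}$, using $a_jb_s,b_sa_{m_2}\in E$. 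Now $a_i,a_j,b_s,b_t$ are all on the cycle, it is still Hamiltonian, and the pairing edges $a_ia_j$ and $b_sb_t$ — hmm, but $a_i$ and $a_j$ are now in different parts of the cycle and the edge $a_ia_j$ is in $M$: we needed $E(H)\setminus M\subseteq E(K_{n,n})$, and $a_ia_j\in M$ need NOT be in $H$ — re-reading the definition, $H$ must \emph{contain} $M$. So $a_ia_j$ must be an edge of $H$ after all. Then the internal edge cannot be bypassed; instead $H$ genuinely uses the non-$K_{n,n}$ edge $a_ia_j$, which is allowed since $a_ia_j\in M$. So the splice should \emph{keep} $a_ia_j$ as a cycle edge: replace $e_1=b_{\ell_1}a_{m_1}$ by $b_{\ell_1}\,a_i\,a_j\,b_s\,b_t\,a_{m_1}$? needs $a_jb_s$? no — I want $a_ia_j$ and $b_sb_t$ consecutive. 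Replace the single edge $e_1 = b_{\ell_1}a_{m_1}$ by the path $b_{\ell_1}\,a_j\,a_i\,?$, no. The clean fix: replace $e_1=b_{\ell_1}a_{m_1}$ by $b_{\ell_1}\,a_i\,a_j\,b_{?}$ — we have freedom: put $a_ia_j$ in, exit $a_j$ via a real edge $a_jb_t$, then $b_tb_s\in M$, then exit $b_s$ via real edge $b_sa_{m_1}$; so $e_1 \rightsquigarrow b_{\ell_1}\,a_i\,a_j\,b_t\,b_s\,a_{m_1}$ where the edges $b_{\ell_1}a_i, a_jb_t, b_sa_{m_1}\in E(K_{n,n})$ and $a_ia_j, b_tb_s \in M$. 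Only one insertion point is needed. This is the construction I would present; the main obstacle is purely the indexing/case-analysis to guarantee such a crossing edge $e_1$ of $H'$ exists with $b_{\ell_1},a_{m_1}$ not already forced — which is immediate since $H'$ is a cycle of length $2(n-2)\ge 4$ (using $n\ge 2$, with the tiny cases $n=2,3$ checked by hand) — and to confirm that after the splice the result is still a single Hamiltonian cycle containing all of $M$.
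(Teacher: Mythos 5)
Your final construction is correct, and it proves the theorem, but the route differs from the paper's in its induction parameter and reduction step. The paper inducts on $n$: it finds an index $j$ with $u_jw_j\notin M$, deletes $u_j$ and $w_j$, replaces their two pairing edges $xu_j$, $yw_j$ by the single pairing edge $xy$ of $K_{n-1,n-1}$, and after recursing splices $x\,u_j\,w_j\,y$ back in place of $xy$. This treats crossing and internal pairing edges completely uniformly and needs no classification of $M$ at all. You instead first split $M$ into $k$ internal $A$-edges, $k$ internal $B$-edges and $n-2k$ crossing edges, induct on $k$ with the $k=0$ (perfect-matching) case as the base, and in the step delete one internal pair from each side and splice the path $b_{\ell_1}\,a_i\,a_j\,b_t\,b_s\,a_{m_1}$ into a free edge of $H'$. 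That splice is sound: the three new non-$M$ edges are all crossing, $a_ia_j$ and $b_tb_s$ are restored, and a single Hamiltonian cycle results. What the paper's version buys is brevity (one vertex per side removed, no parity bookkeeping, no separate small cases); what yours buys is that the base case isolates the PMH-type statement for perfect matchings of $K_{n,n}$, which is of independent interest. Two points you should make precise in a write-up: first, the edge $e_1$ must be chosen in $E(H')\setminus M'$, not merely as ``a crossing edge of $H'$'' --- $M'$ may itself contain crossing edges of $M$, and deleting one of those from the cycle would destroy the required containment $M\subseteq E(H)$; the choice is always possible since $E(H')\setminus M'$ consists of exactly $n-2$ crossing edges. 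Second, the step reduces $(n,k)$ to $(n-2,k-1)$, so the cases $n\in\{2,3\}$ with $k=1$ cannot recurse ($K_{0,0}$ and $K_{1,1}$ have no Hamiltonian cycle) and must be verified directly, as you note. Finally, the many visible false starts should of course be excised from the final text.
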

\begin{proof}
Let $\{u_{1},\ldots,u_{n}\}$ and $\{w_{1},\ldots,w_{n}\}$ be the partite sets of $K_{n,n}$. We proceed by induction on $n$. When $n=2$, result holds since $K_{2,2}\simeq K_{2}\square K_{2}$. So assume $n>2$ and let $M$ be a pairing of $K_{n,n}$. If $M=\{u_{i}w_{i}:i\in[n]\}$, then $M$ easily extends to a Hamiltonian cycle of the underlying complete graph on $2n$ vertices. Thus, assume there exists $j\in[n]$ such that $u_{j}w_{j}\not\in M$. Without loss of generality, let $j$ be equal to $n$. Then, $M$ contains the edges $xu_{n}$ and $yw_{n}$, for some $x$ and $y$ belonging to the set $Z=\{u_{i}, w_{i}: i\in[n-1]\}$. We note that $Z$ induces the complete bipartite graph $K_{n-1,n-1}$ with partite sets $\{u_1, \ldots, u_{n-1}\}$ and $\{w_1, \ldots, w_{n-1}\}$, which we denote by $G'$.  The set of edges $M'=M\cup xy- xu_{n}-yw_{n}$ is a pairing of $G'$, and so, by induction on $n$, $M'$ can be extended to a Hamiltonian cycle $H'$ of $K_{G'}$. This Hamiltonian cycle can be extended to a Hamiltonian cycle $H$ of the underlying complete graph of $K_{n,n}$ by replacing the edge $xy$ in $H'$, by the edges $xu_{n},u_{n}w_{n},w_{n}y$. The resulting Hamiltonian cycle $H$ clearly contains $M$, proving our theorem.
\end{proof}

Although the statement and proof of Theorem \ref{TheoremKnnPH} are quite easy, they may lead to another intriguing problem. From Theorem \ref{TheoremMain} we know that the rook  is not good enough to solve our problem on a $2\times m_{2}$ chessboard when $m_{2}$ is odd. However, the above result shows that if the rook was somehow allowed to do only vertical and diagonal moves (instead of vertical and horizontal moves only), then it would always be possible to perform a closed tour on a $2 \times m_{2}$ chessboard in such a way that each pair of cells in $M$ is allowed to and must be used once, no matter the choice of $M$.
We shall call this new hybrid chess piece the \emph{bishop-on-a-rook}, and, as already stated, it is only allowed to move in a vertical and diagonal manner---no horizontal moves are permissible. As in the case of the rook, all the legal moves of a bishop-on-a-rook on a $m_{1}\times m_{2}$ chessboard give rise to the $m_{1}\times m_{2}$ \emph{bishop-on-a-rook graph}, with $m_{1}$ corresponding to the vertical axis.

As before, for the $m_1 \times m_2$ bishop-on-a-rook graph to be PH, at least one of $m_1$ or $m_2$ must be even. Moreover, we remark that when $m_{2}\leq m_{1}$, the $m_{1}\times m_{2}$ bishop-on-a-rook graph contains $K_{m_{1}}\square K_{m_{2}}$ as a subgraph. Finally, we also observe that the $m_{1}\times m_{2}$ bishop-on-a-rook graph is isomorphic to the co-normal product of $K_{m_{1}}$ and $\overline{K}_{m_2}$, where the latter is the empty graph on $m_{2}$ vertices. 
The \emph{co-normal product} $G*H$ of two graphs $G$ and $H$ is a graph whose vertex set is the Cartesian product $V(G) \times V(H)$ of $V(G)$ and $V(H)$, and two vertices $(u_i,v_j)$ and $(u_k,v_l)$ are adjacent precisely if  $u_iu_k\in E(G)$ or $v_jv_l\in E(H)$. Thus, 
\begin{eqnarray*}
& V(G*H)= \{(u_r,v_s) : u_r \in V(G) \text{ and } v_s \in V(H)\},\text{ and } \\
& E(G*H)=\{(u_i,v_j)(u_k,v_l):u_iu_k\in E(G)\text{ or } v_jv_l\in E(H)\}.
\end{eqnarray*}

We wonder for which values $m_{1}$ and $m_{2}$ is the $m_{1}\times m_{2}$ bishop-on-a-rook graph PH.

\end{document}